\newcommand{\ie}{\emph{i.e.}}
\newcommand{\eg}{\emph{e.g.}}
\newcommand{\cf}{\emph{cf}}
\newcommand{\Nat}{\mathbb{N}}
\newcommand{\Real}{\mathbb{R}}
\newcommand{\sii}{L^2}
\newcommand{\Dom}{D}
\newcommand{\sgn}{\mathop{\mathrm{sgn}}\nolimits}
\newcommand{\eps}{\varepsilon}
\newtheorem{Theorem}{Theorem}
\newtheorem{Proposition}{Proposition}
\newtheorem{Corollary}{Corollary}
\theoremstyle{remark}
\newtheorem{Remark}{Remark}
\begin{document}
%
\title{\textbf{\Large
The asymptotic behaviour of the heat equation
in a twisted Dirichlet-Neumann waveguide
}}
\author{
David Krej\v{c}i\v{r}{\'\i}k$\,^a$ \ and \ Enrique Zuazua$\,^b$
}
\date{\small
\emph{
$a)$ Department of Theoretical Physics,
Nuclear Physics Institute,
A\-cad\-e\-my of Sciences, 25068 \v Re\v z, Czech Republic;
krejcirik@ujf.cas.cz
\medskip \\
$b)$ IKERBASQUE Research Professor,
BCAM - Basque Center for Applied Mathematics,
Bizkaia Technology Park, Building 500,
E-48160 Derio - Basque Country - Spain;
zuazua@bcamath.org
}
\bigskip \\
14 June 2010
}
\maketitle
\begin{abstract}
\noindent
We consider the heat equation in a straight strip,
subject to a combination of Dirichlet and Neumann boundary conditions.
We show that a switch of the respective boundary conditions
leads to an improvement of the decay rate of the heat semigroup of
the order of $t^{-1/2}$.
The proof employs similarity variables that lead to a non-autonomous
parabolic equation in a thin strip contracting to the real line,
that can be analyzed on weighted Sobolev spaces
in which the operators under consideration have discrete spectra.
A careful analysis of its asymptotic behaviour shows that
an added Dirichlet boundary condition emerges asymptotically
at the switching point,
breaking the real line in two half-lines,
which leads asymptotically to the $1/2$ gain on the spectral lower bound,
and the $t^{-1/2}$ gain on the decay rate in the original physical variables.

This result is an adaptation to the case of strips
with twisted boundary conditions
of previous results by the authors
on geometrically twisted Dirichlet tubes.
%
%
\end{abstract}
%
%
\newpage
\section{Introduction}
%
We consider the heat equation
\begin{equation}\label{heat.intro}
  u_t - \Delta u = 0
\end{equation}
in an infinite planar strip $\Omega:=\Real\times(-a,a)$ of half-width $a>0$,
subject to
\begin{equation*}
\left\{
\begin{aligned}
\mbox{Dirichlet boundary conditions on} \ \,
&
  \Gamma_\pi^D :=
  (-\infty,0)\times\{-a\}
  \cup
  (0,+\infty)\times\{a\}
\,,
\\
\mbox{Neumann boundary conditions on} \ \,
&
  \Gamma_\pi^N :=
  (0,+\infty)\times\{-a\}
  \cup
  (-\infty,0)\times\{a\}
\,,
\end{aligned}
\right.
\end{equation*}
and to the initial condition
\begin{equation}\label{heat.initial}
  u(\cdot,0) = u_0 \in \sii(\Omega)
  \,.
\end{equation}

This model is considered as a `twisted' counterpart
of the explicitly solvable problem given by (see~Figure~\ref{Fig.DN}):
\begin{equation*}
\left\{
\begin{aligned}
\mbox{Dirichlet boundary conditions on} \ \,
&
  \Gamma_0^D :=
  (-\infty,+\infty)\times\{-a\}
\,,
\\
\mbox{Neumann boundary conditions on} \ \,
&
  \Gamma_0^N :=
  (-\infty,+\infty)\times\{a\}
\,.
\end{aligned}
\right.
\end{equation*}

Henceforth we shall use the common subscript
$$
  \theta\in\{0,\pi\}
$$
when we want to deal with the two problems simultaneously
(the value of~$\theta$ suggests the rotation angle
giving rise to twisting/untwisting).

\begin{figure}[h!]
\begin{center}
\includegraphics[width=0.46\textwidth]{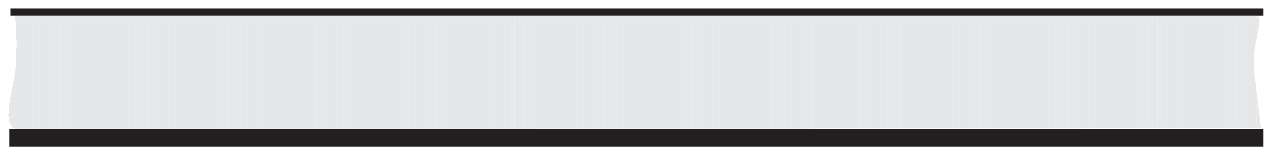}
\quad
\includegraphics[width=0.46\textwidth]{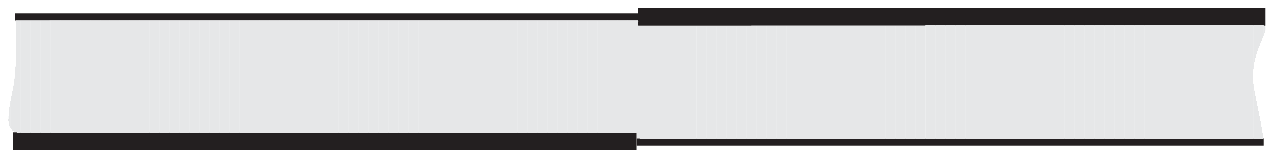}
\end{center}
\caption{Planar strips with untwisted (left)
and twisted (right) boundary conditions;
the thick and thin lines correspond to
Dirichlet and Neumann boundary conditions, respectively.}
\label{Fig.DN}
\end{figure}

The solution to~\eqref{heat.intro}--\eqref{heat.initial}
is given by $u(t) = e^{\Delta_\theta t} u_0$,
where $e^{\Delta_\theta t}$
is the semigroup operator on $\sii(\Omega)$
associated with the Laplacian~$-\Delta_\theta$
determined by the respective boundary conditions
(depending on~$\theta$).

The operators $-\Delta_{\pi}$ and $-\Delta_{0}$ have the same spectrum
\begin{equation}\label{spectrum}
  \sigma(-\Delta_{\theta}) =
  \sigma_\mathrm{ess}(-\Delta_{\theta}) =
  [E_1,\infty)
  \,, \qquad \mbox{where} \qquad
  E_1 := \left(\frac{\pi}{4a}\right)^2
  \,.
\end{equation}
Consequently, for all $t \geq 0$,
\begin{equation}\label{spectral.map}
  \big\|e^{\Delta_\theta t}\big\|_{\sii(\Omega)\to\sii(\Omega)}
  = e^{-E_1 t}
  \,,
\end{equation}
irrespectively of the value of~$\theta$.

In this paper, we are interested in additional time decay properties
of the heat semigroup, when the initial data are restricted
to a subspace of the Hilbert space~$\sii(\Omega)$.
We restrict ourselves to the weighted space
\begin{equation}\label{weight}
  \sii(\Omega,K)
  \qquad\mbox{with}\qquad
  K(x) := e^{x_1^2/4}
  \,,
\end{equation}
which means that the initial data are required
to be sufficiently rapidly decaying at the infinity of the strip.
As a measure of the additional decay, we consider
the (polynomial) \emph{decay rate}
\begin{equation}\label{rate}
  \gamma_\theta
  := \sup \Big\{ \gamma \left| \
  \exists C_\gamma > 0, \, \forall t \geq 0, \
  \big\|e^{(\Delta_\theta+E_1)t}\big\|_{
  \sii(\Omega,K)
  \to
  \sii(\Omega)
  }
  \leq C_\gamma \, (1+t)^{-\gamma}
  \Big\} \right.
  .
\end{equation}
Our main result reads as follows:
\begin{Theorem}\label{Thm.rate}
We have $\gamma_0=1/4$, while $\gamma_\pi \geq 3/4$.
\end{Theorem}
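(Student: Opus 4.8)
The two claims call for rather different methods.

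\emph{The untwisted case ($\gamma_0=1/4$).} Because the boundary conditions defining $-\Delta_0$ are invariant under longitudinal translations, separation of variables gives $\sii(\Omega)=\sii(\Real)\otimes\sii((-a,a))$ and $-\Delta_0=(-\partial_{x_1}^2)\otimes I+I\otimes T$, where $T=-\partial_{x_2}^2$ on $(-a,a)$ with a Dirichlet condition at $-a$ and a Neumann condition at $+a$; its spectrum is the discrete set $\{E_n\}_{n\ge1}$ with $E_n=((2n-1)\pi/(4a))^2$, so $T\ge E_1$ and there is a uniform gap $E_2-E_1=8E_1>0$. Since the weight $K$ in \eqref{weight} depends on $x_1$ only, also $\sii(\Omega,K)=\sii(\Real,K)\otimes\sii((-a,a))$, and because $e^{-(T-E_1)t}$ is a contraction of $\sii((-a,a))$ while the operator norm of a tensor product factorises,
$$
  \big\|e^{(\Delta_0+E_1)t}\big\|_{\sii(\Omega,K)\to\sii(\Omega)}
  = \big\|e^{t\partial_{x_1}^2}\big\|_{\sii(\Real,K)\to\sii(\Real)}
  \,.
$$
The right-hand side is $\asymp(1+t)^{-1/4}$: the upper bound follows from Plancherel and $\|\widehat{u_0}\|_{L^\infty}\le\|u_0\|_{L^1}\le C\|u_0\|_{\sii(\Real,K)}$, while the matching lower bound is witnessed by $u_0=e^{-x_1^2/4}$, whose heat evolution is the explicit Gaussian $(1+t)^{-1/2}e^{-x_1^2/(4(1+t))}$ of $\sii(\Real)$-norm comparable to $(1+t)^{-1/4}$. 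Hence $\gamma_0=1/4$.

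\emph{Similarity variables and transverse confinement.} For $-\Delta_\pi$ the longitudinal invariance is broken only at the switching point $x_1=0$, where, moreover, the transverse ground states on the two sides differ. Following the similarity-variable method of Escobedo and Kavian and our earlier work on twisted tubes, one introduces $s:=\log(1+t)$, $y_i:=x_i(1+t)^{-1/2}$, together with an $s$-dependent amplitude rescaling that absorbs the spectral shift $e^{-E_1 t}$ and normalises the bottom of the spectrum. This recasts \eqref{heat.intro} as a \emph{non-autonomous} parabolic equation $\partial_s v=-H_\pi(s)\,v$ on the \emph{contracting} strip $\Real\times(-a_s,a_s)$, $a_s:=ae^{-s/2}$, still carrying the twisted boundary conditions at $y_1=0$, where each $H_\pi(s)$ has purely discrete spectrum on the Gaussian-weighted space $\sii(\Real\times(-a_s,a_s),K)$. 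The transverse part of $H_\pi(s)$ is non-negative, vanishes exactly on the transverse ground-state fibre, and has spectral gap above it diverging like $(E_2-E_1)e^{s}\to\infty$; hence, on bounded energy shells, $v(s)$ is asymptotically confined to that fibre as $s\to\infty$.

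\emph{The emergent Dirichlet condition (the crux).} On $\{y_1<0\}$ the transverse ground state is $\mathcal J^-(y_2)\propto\sin\!\big(\tfrac{\pi}{4a_s}(y_2+a_s)\big)$, on $\{y_1>0\}$ its mirror image $\mathcal J^+(y_2)\propto\sin\!\big(\tfrac{\pi}{4a_s}(a_s-y_2)\big)$, and these two functions on $(-a_s,a_s)$ are linearly independent; hence any function lying in the transverse ground-state fibre on both sides and continuous across $y_1=0$ must have vanishing fibre coefficient there. Thus, as $s\to\infty$, an \emph{extra Dirichlet condition at the switching point emerges}: $H_\pi(s)$ converges, in the form/strong-resolvent sense and after the confinement above, to the one-dimensional Ornstein--Uhlenbeck-type operator on $\Real\setminus\{0\}$ with a Dirichlet condition at the origin, whose spectral bottom equals that of the odd sector of the free one-dimensional limit operator. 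The free ground state $e^{-y_1^2/4}$ is thereby replaced by $|y_1|\,e^{-y_1^2/4}$, raising the spectral bottom by exactly $1/2$. The static counterpart of this gain is a Hardy-type inequality $\|\nabla u\|_{\sii(\Omega)}^2-E_1\|u\|_{\sii(\Omega)}^2\ge\int_\Omega\varrho\,|u|^2$ with $\varrho>0$, valid for $-\Delta_\pi$ but failing for $-\Delta_0$.

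\emph{From the spectral gain to the decay rate.} It remains to convert the $1/2$ improvement of the spectral bottom into a $1/2$ improvement of the decay exponent. Since the $v$-equation is non-autonomous one replaces the spectral theorem by the energy identity $\frac{d}{ds}\|v(s)\|^2=-2\langle v(s),H_\pi(s)v(s)\rangle$ together with a lower bound $\langle v,H_\pi(s)v\rangle\ge\lambda_1(s)\|v\|^2+(\text{controlled remainder})$, where $\lambda_1(s)\to\lambda_1^\infty$ exceeds the untwisted value by $1/2$ and the remainder is absorbed by projecting off the finitely many lowest transverse and longitudinal modes, which decay faster. This yields $\|v(s)\|\le C_\eps\, e^{-(\lambda_1^\infty-\eps)s}\|v(0)\|_{\sii(K)}$ for every $\eps>0$, uniformly for $s\ge0$, and reverting the change of variables gives $\|e^{(\Delta_\pi+E_1)t}\|_{\sii(\Omega,K)\to\sii(\Omega)}\le C_\eps(1+t)^{-(3/4-\eps)}$, hence $\gamma_\pi\ge3/4$; only a lower bound is obtained because the argument controls the slowest admissible mode rather than the genuine long-time asymptotics. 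The heart of the matter — and the expected main obstacle — is precisely this last stage: proving the Hardy-type inequality and the uniform-in-$s$ spectral lower bound for the non-autonomous family $H_\pi(s)$ on the contracting strip, and making the ensuing energy/Gronwall estimate rigorous in spite of the time dependence.
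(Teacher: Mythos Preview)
Your proposal is correct and follows essentially the same route as the paper: separation of variables for $\gamma_0=1/4$, and for $\gamma_\pi\ge 3/4$ the self-similar change of variables, reduction to a non-autonomous spectral problem, identification of the emergent Dirichlet condition at $y_1=0$ (because the left/right transverse ground states are linearly independent), and the energy/Gronwall estimate yielding the $1/2$ gain.

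Two presentational points are worth aligning with the paper. First, the paper rescales only the \emph{longitudinal} variable, keeping the strip $\Omega$ fixed and producing the coefficient $e^s$ in front of $\partial_2^2$; your contracting-strip picture is unitarily equivalent (the paper itself remarks on this), but working on the fixed domain avoids the bookkeeping of $s$-dependent function spaces. Second, your final energy step is cleaner than you suggest: once $\mu_\pi(s):=\inf\sigma(H_\pi(s))$ is introduced, the inequality $\langle v,H_\pi(s)v\rangle\ge\mu_\pi(s)\|v\|^2$ holds with \emph{no} remainder and no need to ``project off modes''; together with the trivial lower bound $\mu_\pi(s)\ge 1/4$ (from the transverse Poincar\'e inequality) and the convergence $\mu_\pi(s)\to 3/4$, Gronwall gives the result directly. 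In particular the Hardy inequality, which you flag as ``the heart of the matter'', is not actually used in the proof --- the paper stresses this explicitly; the crux is rather the strong-resolvent convergence of $H_\pi(s)$ to the harmonic oscillator with a Dirichlet condition at the origin.
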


Here the power~$1/4$ corresponding to the untwisted case $\theta=0$
reflects the quasi-one-dimensional
nature of our model (recall that $d/4$ is the analogous decay rate
for the heat semigroup in the $L^2$-space
over the whole Euclidean space~$\Real^d$
for initial data in
$
  L^1(\Real^d) \supset \sii(\Real^d,e^{|x|^2/4} dx)
$).
Indeed, the result for~$\theta=0$ follows easily
by separation of variables (\cf~Section~\ref{Sec.straight}).

The essential content of Theorem~\ref{Thm.rate}
is that solutions to~\eqref{heat.intro}
when the strip is twisted (\ie~$\theta=\pi$) gain a further decay rate $1/2$.
The proof of this statement is more involved and constitutes the main body
of the paper (\cf~Section~\ref{Sec.ss}).
It is based on the method of self-similar solutions
developed in the whole Euclidean space
by Escobedo and Kavian~\cite{Escobedo-Kavian_1987}
and adapted to waveguide systems by the present authors in~\cite{KZ},
where it was shown that the heat kernel decays faster
in geometrically twisted tubes than in untwisted ones.
An open problem is to show that the decay rate~$\gamma_\pi$
is precisely~$3/4$ (\cf~Section~\ref{Sec.end}).

The way how to understand the difference in the decay rates
of Theorem~\ref{Thm.rate} is due to a fine difference between
the operators $-\Delta_0$ and $-\Delta_\pi$ in the spectral setting:
Although the operators have the same spectrum (\cf~\eqref{spectrum}),
%
%
the shifted operator $-\Delta_{0} - E_1$ is critical,
while $-\Delta_{\pi} - E_1$ is subcritical.
The latter is reflected in the existence of a Hardy-type inequality
\begin{equation}\label{Hardy}
  -\Delta_\pi - E_1 \geq \rho
\end{equation}
with a positive function~$\rho$
(while such an inequality cannot hold for $-\Delta_0 - E_1$).

Various Hardy inequalities for $-\Delta_{\pi} - E_1$
were established in~\cite{KK2}.
A general conjecture on the influence
of the subcriticality of an operator
on the improvement of the decay of the associated semigroup
was made in~\cite{KZ}, where an analog of Theorem~\ref{Thm.rate}
was proved for the decay rate in three-dimensional Dirichlet tubes.
We also refer to~\cite{FKP} where the conjecture
(for not necessarily self-adjoint operators)
is analysed from the point of view of heat kernels
and its relationship with Davies' conjecture~\cite{Davies_1997}
is observed.

The organization of this paper is as follows.
In the following Section~\ref{Sec.Pre}
we give a precise definition of the Laplacians $-\Delta_\theta$
and the associated semigroups.
The untwisted case is briefly treated in Section~\ref{Sec.straight},
obtaining, \emph{inter alia},
the first statement of Theorem~\ref{Thm.rate}.
The main body of the paper is represented by Section~\ref{Sec.ss}
where we develop the method of self-similar solutions
to get the improved decay rate of Theorem~\ref{Thm.rate}
(and also to establish
an alternative result, Theorem~\ref{Thm.rate.alt}).
The paper is concluded in Section~\ref{Sec.end}
by referring to physical interpretations of the result
and to some open problems.

\section{Preliminaries}\label{Sec.Pre}
%
The Laplacians $-\Delta_{\theta}$ are introduced
as the self-adjoint operators associated on $\sii(\Omega)$
with the quadratic form $\psi \mapsto \|\nabla\psi\|^2$
having the domains
$$
  \mathfrak{D}_\theta(\Omega) :=
  \left\{ \psi\in H^1(\Omega) \ | \
  \psi = 0 \ \, \mbox{on} \ \, \Gamma_\theta^D
  \right\}
  \,.
$$
Here and in the sequel $\|\cdot\|$~denotes
the norm in the Hilbert space~$\sii(\Omega)$.
It is possible to specify the operator domains (\cf~\cite{DKriz1}),
but we will not need them.
We only mention the result from~\cite{DKriz1} that the set
of restrictions of functions from $C_0^\infty(\Real^2)$ to~$\Omega$
that vanish on $\Gamma_\theta^D$ is dense in $\mathfrak{D}_\theta$
with respect to the $H^1(\Omega)$ norm (\cf~\cite[App.~B]{DKriz1}).

In view of~\eqref{spectrum}, both the operators $-\Delta_{\theta}$
satisfy the Poincar\'e-type inequality
\begin{equation}\label{Poincare}
  -\Delta_{\theta} \geq E_1
\end{equation}
in the sense of forms on $\sii(\Omega)$.
Here~$E_1$ is the first eigenvalue of
the one-dimensional operator $-\Delta_{DN}^{(-a,a)}$,
\ie\ the Laplacian in $\sii((-a,a))$
subject to the Dirichlet boundary condition at~$-a$
and Neumann boundary condition at~$a$.
This inequality is sharp for $-\Delta_{0}$,
while it follows from~\cite{KK2} that~\eqref{Poincare}
can be improved to a Hardy-type inequality~\eqref{Hardy} if $\theta=\pi$.

Recalling~\eqref{spectral.map} and that we are interested
in additional decay properties of~\eqref{heat.intro},
it is natural to rather consider the shifted parabolic problem
(obtained from the standard heat equation~\eqref{heat.intro}
by the replacement
$
  u(x,t) \mapsto e^{-E_1 t} \, u(x,t)
$):
\begin{equation}\label{heat}
  u_t - \Delta u - E_1 u = 0
  \,,
\end{equation}
subject to the initial condition~\eqref{heat.initial}
and the boundary conditions
\begin{equation}\label{heat.bc}
  u=0
  \quad\mbox{on}\quad \Gamma_\theta^D \times (0,\infty) \,,
  \qquad
  \frac{\partial u}{\partial n} =0
  \quad\mbox{on}\quad \Gamma_\theta^N \times (0,\infty) \,,
\end{equation}
where~$n$ denotes the normal vector to the boundary~$\partial\Omega$.

As usual, we consider the weak formulation of the problem
and, with an abuse of notation, we denote by the same symbol~$u$
both the function on $\Omega\times(0,\infty)$
and the mapping $(0,\infty) \to \sii(\Omega)$.
Standard semigroup theory implies that there exists
a unique solution of~\eqref{heat}--\eqref{heat.bc},
subject to the initial condition~\eqref{heat.initial},
that belongs to $C^0\big([0,\infty),\sii(\Omega)\big)$.
More precisely, the solution is given by $u(t) = S_\theta(t) u_0$,
where
\begin{equation}\label{semigroup}
  S_\theta(t) := e^{(\Delta_\theta+E_1)t}
\end{equation}
is the heat semigroup associated with
the shifted Laplacian $-\Delta_\theta-E_1$.

It is easy to see that the real and imaginary parts
of the solution~$u$ of~\eqref{heat} evolve separately.
By writing $u = \Re(u) + i \, \Im(u)$ and solving~\eqref{heat}
with initial data $\Re(u_0)$ and $\Im(u_0)$,
we may therefore reduce the problem to the case of a real function~$u_0$,
without restriction.
Consequently, all the functional spaces are considered to be real in the sequel.

\section{The untwisted strip}\label{Sec.straight}
%
If the strip is untwisted (\ie~$\theta=0$),
the heat equation~\eqref{heat}
can be easily solved by separation of variables.
Indeed, the Laplacian~$-\Delta_0$
can be identified with the decomposed operator
\begin{equation}\label{decomposition}
  (-\Delta^\Real) \otimes 1 + 1 \otimes
  (-\Delta_{DN}^{(-a,a)})
  \qquad\mbox{in}\qquad
  \sii(\Real) \otimes \sii((-a,a))
  \,,
\end{equation}
where~$-\Delta^\Real$ denotes the one-dimensional free Hamiltonian
(\ie~the usual self-adjoint realization of the Laplacian in~$\sii(\Real)$)
and~$1$ stands for the identity operators in the appropriate spaces.

The eigenvalues and (normalized) eigenfunctions of~$-\Delta_{DN}^{(-a,a)}$
are respectively given by ($n = 1,2,\dots$)
\begin{equation}\label{spectrum.straight}
  E_n := (2n-1)^2 E_1
  \,, \qquad
  \mathcal{J}_{n}(y_2)
  := \sqrt{\frac{1}{a}} \, \sin\left[ E_n (y_2+a) \right]
  \,,
\end{equation}
while the spectral resolution of~$-\Delta^\Real$
is obtained by the Fourier transform.
Then it is easy to see that the heat semigroup~$S_0(t)$
is an integral operator with kernel
\begin{equation}\label{kernel.straight}
  s_0(x,x',t) := \sum_{n=1}^\infty e^{-(E_n-E_1)t} \,
  \mathcal{J}_n(x_2) \, p(x_1,x_1',t) \, \mathcal{J}_n(x_2')
  \,,
\end{equation}
where
$$
  p(x_1,x_1',t) := \frac{e^{-(x_1-x_1')^2/(4t)}}{\sqrt{4\pi t}}
$$
is the well known heat kernel of~$-\Delta^\Real$.

Using the explicit form of the heat kernel,
it is straightforward to establish the following bounds:
\begin{Proposition}\label{Prop.straight}
There exists a constant~$C$ such that for every $t \geq 1$,
$$
  C^{-1} \, t^{-1/4}
  \leq
  \|S_0(t)\|_{\sii(\Omega,K) \to \sii(\Omega)}
  \leq
  C \, t^{-1/4}
  \,.
$$
\end{Proposition}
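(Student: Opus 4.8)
The plan is to exploit the explicit series representation \eqref{kernel.straight} for the kernel of $S_0(t)$ and reduce everything to a one-dimensional computation. First I would observe that for $t\ge 1$ the sum over transverse modes is dominated by the first term $n=1$: since $E_n-E_1 = \big((2n-1)^2-1\big)E_1 \ge 8E_1$ for $n\ge 2$, the tail $\sum_{n\ge2} e^{-(E_n-E_1)t}\,\mathcal{J}_n(x_2)p(x_1,x_1',t)\mathcal{J}_n(x_2')$ is, as an operator, bounded by $C e^{-8E_1 t}$ times the one-dimensional heat operator with kernel $p$, which in turn is $O(t^{-1/2})$ from $L^1(\Real)$ to $\sii(\Real)$. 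Hence this tail contributes a term of order $e^{-8E_1 t}\,t^{-1/2}$, exponentially small for large $t$, and therefore negligible compared with the claimed $t^{-1/4}$. So the whole question is reduced to estimating the operator on $\sii(\Omega)=\sii(\Real)\otimes\sii((-a,a))$ whose kernel is $\mathcal{J}_1(x_2)\,p(x_1,x_1',t)\,\mathcal{J}_1(x_2')$, i.e. the tensor product $P(t)\otimes\Pi_1$, where $P(t)$ is the free heat semigroup on $\sii(\Real)$ and $\Pi_1$ is the rank-one projection onto $\mathcal{J}_1$.

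The upper bound then follows from two one-dimensional facts. The transverse projection $\Pi_1$ has norm $1$ on $\sii((-a,a))$, and moreover the weight $K(x)=e^{x_1^2/4}$ acts only in the longitudinal variable, so $\|S_0(t)\|_{\sii(\Omega,K)\to\sii(\Omega)}$ is, up to the exponentially small correction above, equal to $\|P(t)\|_{\sii(\Real,K)\to\sii(\Real)}$. Now I would simply estimate this norm directly: for $v\in\sii(\Real,K)$, Cauchy–Schwarz gives $|(P(t)v)(x_1)| \le \big(\int p(x_1,x_1',t)^2 K(x_1')^{-1}\,dx_1'\big)^{1/2}\|v\|_{\sii(\Real,K)}$, and the Gaussian integral $\int (4\pi t)^{-1} e^{-(x_1-x_1')^2/(2t)} e^{-x_1'^2/4}\,dx_1'$ is computed explicitly to be of order $t^{-1}(1+t)^{-1/2}$ uniformly in $x_1$ — actually, integrating this bound once more over $x_1$ (the remaining Gaussian in $x_1$ is integrable with mass $O((1+t)^{1/2})$ after completing the square) yields $\|P(t)v\|_{\sii(\Real)}^2 \le C\,t^{-1/2}\,\|v\|_{\sii(\Real,K)}^2$ for $t\ge1$, hence the bound $C\,t^{-1/4}$. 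The transverse factor contributes nothing beyond the constant. One must be slightly careful to keep the constants uniform for $t\ge1$ (there is no claim near $t=0$), but this is routine bookkeeping with Gaussian integrals.

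For the matching lower bound I would test the operator on a single well-chosen state. Take $u_0(x) = \chi(x_1)\,\mathcal{J}_1(x_2)$ where $\chi$ is a fixed nonnegative bump function of compact support, say supported in $[-1,1]$; then $u_0\in\sii(\Omega,K)$ with a fixed finite norm. Because $\mathcal{J}_1$ is preserved by the evolution, $S_0(t)u_0(x) = \big(P(t)\chi\big)(x_1)\,\mathcal{J}_1(x_2)$, so $\|S_0(t)u_0\|_{\sii(\Omega)} = \|P(t)\chi\|_{\sii(\Real)}$. Since $\chi$ has total mass $m:=\int\chi>0$ and is compactly supported, for $t\ge1$ the convolution $P(t)\chi(x_1)$ is bounded below by $c\,t^{-1/2} e^{-x_1^2/(2t)}$ on the region $|x_1|\le\sqrt t$ (each shifted Gaussian is at least a constant multiple of the value at $x_1'=0$ on that range), and squaring and integrating over $|x_1|\le\sqrt t$ gives $\|P(t)\chi\|_{\sii(\Real)}^2 \ge c'\,t^{-1}\cdot\sqrt t = c'\,t^{-1/2}$. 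Dividing by the fixed norm $\|u_0\|_{\sii(\Omega,K)}$ gives $\|S_0(t)\|_{\sii(\Omega,K)\to\sii(\Omega)} \ge C^{-1} t^{-1/4}$, completing the proof.

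The argument is essentially all routine Gaussian estimates; the only place demanding a little care — the ``main obstacle,'' such as it is — is bounding the transverse tail series $\sum_{n\ge2}$ as an operator uniformly in $t\ge1$, i.e. making sure that the sum of the exponential factors $\sum_{n\ge2}e^{-(E_n-E_1)t}$ converges and is $o(t^{-1/4})$ while simultaneously controlling the $\sii$-norms of the products $\mathcal{J}_n(x_2)\mathcal{J}_n(x_2')$; but since $\|\mathcal{J}_n\|_{\sii((-a,a))}=1$ for every $n$ and $E_n-E_1$ grows quadratically in $n$, the tail is bounded by $C e^{-8E_1 t}$ for $t\ge1$, which is far smaller than any power of $t$, so this obstacle is negligible. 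Everything reduces cleanly to the one-dimensional heat kernel on $\Real$ with the Gaussian weight $K$, for which both bounds are classical.
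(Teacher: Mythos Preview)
Your proof is correct and follows essentially the same approach as the paper: both reduce to the one-dimensional heat semigroup $P(t)$ on $\sii(\Real)$ via the tensor decomposition~\eqref{decomposition}, use Cauchy--Schwarz with the explicit Gaussian kernel for the upper bound, and test on an initial datum of the form $\varphi(x_1)\mathcal{J}_1(x_2)$ for the lower bound. The only differences are cosmetic: the paper handles all transverse modes at once inside a single Schwarz estimate (obtaining the convergent sum $\sum_n e^{-2(E_n-E_1)t}$), whereas you first peel off the $n\ge 2$ tail as exponentially small --- which is unnecessary since the transverse factor $\sum_n e^{-(E_n-E_1)t}\Pi_n$ has operator norm exactly $1$ for all $t\ge 0$, so in fact $\|S_0(t)\|_{\sii(\Omega,K)\to\sii(\Omega)} = \|P(t)\|_{\sii(\Real,K_1)\to\sii(\Real)}$ with no error term; and the paper suggests the Gaussian test function $\varphi=K_1^{-\alpha}$ rather than a compactly supported bump (both give the same $t^{-1/4}$). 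One small slip: $\|P(t)\|_{L^1(\Real)\to\sii(\Real)}$ is $O(t^{-1/4})$, not $O(t^{-1/2})$, but this is irrelevant since you multiply it by $e^{-8E_1 t}$ anyway.
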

\begin{proof}
To get the lower bound, we may restrict
to the class of initial data~\eqref{heat.initial} of the form
$u_0(x)=\varphi(x_1)\mathcal{J}_1(x_2)$ with $\varphi\in\sii(\Real,K_1)$,
$K_1(x_1):=e^{x_1^2/4}$.
Then it is easy to see from~\eqref{kernel.straight} that
$$
  \|S_0(t)\|_{\sii(\Omega,K) \to \sii(\Omega)}
  \geq
  \|P(t)\|_{\sii(\Real,K_1) \to \sii(\Real)}
  \,,
$$
where~$P(t)$ is the heat semigroup of~$-\Delta^\Real$
for which the lower bound with~$t^{-1/4}$ is well known
(or can be easily established by taking
$\varphi=K_1^{-\alpha}$ with any $\alpha>1/2$
and evaluating the integrals with the kernel~$p$ explicitly).
On the other hand, using the Schwarz inequality,
we have
\begin{align*}
  \|S_0(t) u_0\|^2
  &\leq \|u_0\|_K^2
  \int_{\Omega\times\Omega} s_0(x,x',t)^2 \, K(x')^{-1} \, dx \, dx'
  \\
  & = \|u_0\|_K^2 \
  \sum_{n=1}^{\infty} e^{-2(E_n-E_1)t}
  \int_{\Real\times\Real} p(x_1,x_1',t)^2 \, K_1(x_1')^{-1} \, dx_1 \, dx_1'
\end{align*}
for every $u_0 \in \sii(\Omega,K)$.
Here the sum can be estimated by a constant independent of $t \geq 1$
and the integral (computable explicitly)
is proportional to~$t^{-1/2}$.
\end{proof}
\begin{Remark}
It is clear from the proof that the bounds hold
in less restrictive weighted spaces.
Indeed, it is enough to have a corresponding result
for the one-dimensional heat semigroup~$P(t)$.
\end{Remark}

As a consequence of Proposition~\ref{Prop.straight}, we get:
\begin{Corollary}\label{Corol.straight}
We have $\gamma_0=1/4$.
\end{Corollary}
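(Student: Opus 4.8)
The plan is to read off both inequalities $\gamma_0\leq 1/4$ and $\gamma_0\geq 1/4$ directly from the two-sided estimate of Proposition~\ref{Prop.straight}; the only point requiring a short argument is the passage between the large-time bounds stated there and the $(1+t)^{-\gamma}$ bounds appearing in the definition~\eqref{rate} of the decay rate.

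First I would establish $\gamma_0\geq 1/4$. For $t\geq 1$, Proposition~\ref{Prop.straight} gives $\|S_0(t)\|_{\sii(\Omega,K)\to\sii(\Omega)}\leq C\,t^{-1/4}\leq C\,2^{1/4}(1+t)^{-1/4}$, using $1+t\leq 2t$. For $t\in[0,1]$ I would note that $K\geq 1$, so the inclusion $\sii(\Omega,K)\hookrightarrow\sii(\Omega)$ has norm at most one, and together with $\|S_0(t)\|_{\sii(\Omega)\to\sii(\Omega)}=1$ (which follows from~\eqref{spectral.map} and~\eqref{semigroup}) this yields $\|S_0(t)\|_{\sii(\Omega,K)\to\sii(\Omega)}\leq 1\leq 2^{1/4}(1+t)^{-1/4}$ on $[0,1]$, since there $1+t\leq 2$. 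Hence a bound $C_{1/4}(1+t)^{-1/4}$ holds for all $t\geq 0$, so $1/4$ belongs to the set over which the supremum in~\eqref{rate} is taken, i.e.\ $\gamma_0\geq 1/4$.

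Next I would establish $\gamma_0\leq 1/4$ by contradiction. Suppose some $\gamma>1/4$ were admissible in~\eqref{rate}: then there is $C_\gamma$ such that for all $t\geq 1$, using the lower bound of Proposition~\ref{Prop.straight},
$$
  C^{-1}\,t^{-1/4}
  \leq
  \|S_0(t)\|_{\sii(\Omega,K)\to\sii(\Omega)}
  \leq
  C_\gamma\,(1+t)^{-\gamma}
  \leq
  C_\gamma\,t^{-\gamma}
  \,,
$$
so that $t^{\gamma-1/4}\leq C\,C_\gamma$ for all $t\geq 1$, which is impossible because $\gamma-1/4>0$. Therefore every admissible $\gamma$ satisfies $\gamma\leq 1/4$, whence $\gamma_0\leq 1/4$. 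Combining the two inequalities gives $\gamma_0=1/4$.

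I do not expect any real obstacle here: the entire analytic content of the statement is carried by Proposition~\ref{Prop.straight}, and the corollary amounts to a routine translation of its conclusions into the language of the decay rate~\eqref{rate}, the only mild care being the treatment of the short-time interval $t\in[0,1]$.
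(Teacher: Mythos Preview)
Your proof is correct and follows essentially the same approach as the paper: the paper's proof is a two-line sketch saying that the lower bound of Proposition~\ref{Prop.straight} gives $\gamma_0\leq 1/4$ while the upper bound together with~\eqref{spectral.map} gives $\gamma_0\geq 1/4$, and you have simply spelled out these two steps in detail, including the handling of $t\in[0,1]$ via~\eqref{spectral.map} that the paper only alludes to.
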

\begin{proof}
The lower bound of Proposition~\ref{Prop.straight} implies $\gamma_0 \leq 1/4$.
The opposite inequality follows from the upper bound and~\eqref{spectral.map}.
\end{proof}
%

\section{The self-similarity transformation}\label{Sec.ss}
%
Our method to study the asymptotic behaviour of
the heat equation~\eqref{heat} is to adapt
the technique of self-similar solutions
used in the case of the heat equation in the whole Euclidean space
by Escobedo and Kavian~\cite{Escobedo-Kavian_1987}
to the present problem.
Following~\cite{KZ}, devoted to the analysis of the heat kernel
in twisted tubes,
we perform the self-similarity transformation
in the first (longitudinal) space variable only,
while keeping the other (transverse) space variable unchanged.

\subsection{An equivalent time-dependent problem}
More precisely, we consider a unitary transformation~$U$
on $\sii(\Omega)$ which associates to every solution
$
  u \in \sii_\mathrm{loc}\big((0,\infty),dt;\sii(\Omega,dx)\big)
$
of~\eqref{heat}
a self-similar solution~$\tilde{u}=U u$
in a new $s$-time weighted space
$
  \sii_\mathrm{loc}\big((0,\infty),e^s ds;\sii(\Omega,dy)\big)
$
via
\begin{equation*}
  \tilde{u}(y_1,y_2,s)
  = e^{s/4} u(e^{s/2}y_1,y_2,e^s-1)
  \,.
\end{equation*}
The inverse change of variables is given by
$$
  u(x_1,x_2,t)
  = (t+1)^{-1/4} \, \tilde{u}\big((t+1)^{-1/2}x_1,x_2,\log(t+1)\big)
  \,.
$$
When evolution is posed in that context,
$y=(y_1,y_2)$ plays the role of space variable and~$s$ is the new time.

It is easy to check that, in the new variables,
the evolution is governed by
\begin{equation}\label{heat.similar}
  \tilde{u}_s
  - \mbox{$\frac{1}{2}$} \, y_1 \;\! \partial_1 \tilde{u}
  - \partial_1^2 \tilde{u}
  - e^s \, \partial_2^2 \tilde{u}
  - E_1 \, e^s \, \tilde{u}
  - \mbox{$\frac{1}{4}$} \, \tilde{u}
  = 0
\end{equation}
subject to the same initial and boundary
conditions as~$u$ in~\eqref{heat.initial} and~\eqref{heat.bc}, respectively.

\begin{Remark}
Note that~\eqref{heat.similar} is a parabolic equation with
$s$-time-dependent coefficients.
The same occurs and has been previously analyzed
in twisted three-dimensional tubes~\cite{KZ}
and for a convection-diffusion equation in the whole space
but with a variable diffusion coefficient~\cite{Duro-Zuazua_1999}.
A careful analysis of the behaviour of the underlying elliptic operators
as~$s$ tends to infinity leads to a sharp decay rate for its solutions.
\end{Remark}

Since~$U$ acts as a unitary transformation on $\sii(\Omega)$,
it preserves the space norm of solutions
of~\eqref{heat} and~\eqref{heat.similar}, \ie,
\begin{equation}\label{preserve}
  \|u(t)\|=\|\tilde{u}(s)\|
  \,.
\end{equation}
This means that we can analyse the asymptotic time behaviour
of the former by studying the latter.

However, the natural space to study the evolution~\eqref{heat.similar}
is not $\sii(\Omega)$ but rather the weighted space~\eqref{weight}.
Following the approach of~\cite{KZ}
based on a theorem of J.~L.~Lions~\cite[Thm.~X.9]{Brezis_FR}
about weak solutions of parabolic equations
with time-dependent coefficients,
it can be shown that~\eqref{heat.similar}
is well posed in the scale of Hilbert spaces
\begin{equation*}
  \mathfrak{D}_\theta(\Omega,K)
  \subset \sii(\Omega,K) \subset
  \mathfrak{D}_\theta(\Omega,K)^*
  \,,
\end{equation*}
with
$$
  \mathfrak{D}_\theta(\Omega,K)
  := \left\{ \tilde{u}\in H^1(\Omega,K) \ | \
  \tilde{u} = 0 \ \, \mbox{on} \ \, \Gamma_\theta^D
  \right\}
  ,
$$
where
$
  H^1(\Omega,K)
$
denotes the usual weighted Sobolev space.

\subsection{Reduction to a spectral problem}
Multiplying the equation~\eqref{heat.similar} by $\tilde{u} K$
and integrating by parts (precisely this means that we use $\tilde{u} K$
as a test function in a weak formulation of~\eqref{heat.similar}),
we arrive at the identity
\begin{equation}\label{formal}
  \frac{1}{2} \frac{d}{ds} \|\tilde{u}(s)\|_K^2
  = - J_\theta^s[\tilde{u}(s)]
  \,.
\end{equation}
Here~$\|\cdot\|_K$ denotes the norm in~\eqref{weight} and
\begin{equation*}
  J_\theta^s[\tilde{u}] :=
  \|\partial_1\tilde{u}\|_{K}^2
  + e^s \, \|\partial_2\tilde{u}\|_{K}^2
  - E_1 \, e^s \, \|\tilde{u}\|_{K}^2
  - \frac{1}{4} \, \|\tilde{u}\|_{K}^2
\end{equation*}
is a closed quadratic form with domain
$
  \Dom(J_\theta^s) :=
  \mathfrak{D}_\theta(\Omega,K)
$
(independent of~$s$).
It remains to analyse the coercivity of~$J_\theta^s$.

More precisely, as usual for energy estimates,
we replace the right hand side of~\eqref{formal}
by the spectral bound, valid for each fixed $s \in [0,\infty)$,
\begin{equation}\label{spectral.reduction}
  \forall \tilde{u} \in \Dom(J_\theta^s) \;\!, \qquad
  J_\theta^s[\tilde{u}]
  \geq \mu_\theta(s) \, \|\tilde{u}\|_{K}^2
  \,,
\end{equation}
where~$\mu_\theta(s)$ denotes the lowest point in the spectrum of
the self-adjoint operator~$T_\theta^s$
associated in~$\sii(\Omega,K)$ with~$J_\theta^s$.
Then~\eqref{formal} together with~\eqref{spectral.reduction} implies
the exponential bound
\begin{equation}\label{spectral.reduction.integral}
  \forall s \in [0,\infty) \;\!, \qquad
  \|\tilde{u}(s)\|_{K}
  \leq \|\tilde{u}_0\|_{K} \,
  e^{-\int_0^s \mu_\theta(r) dr}
  \,.
\end{equation}
In this way, the problem is reduced to a spectral analysis
of the family of operators $\{T_\theta^s\}_{s \geq 0}$.

\subsection{Study of the spectral problem}
In order to investigate the operator~$T_\theta^s$ in~$\sii(\Omega,K)$,
we first map it into a unitarily equivalent operator
$\hat{T}_\theta^s := \mathcal{U} T_\theta^s \mathcal{U}^{-1}$
in~$\sii(\Omega)$ via the unitary transform
$$
  \mathcal{U}\;\!\tilde{u} := K^{1/2}\,\tilde{u}
  \,.
$$
By definition, $\hat{T}_\theta^s$~is the self-adjoint operator
associated in $\sii(\Omega)$ with the quadratic form
$
  \hat{J}_\theta^s[v] := J_\theta^s[\mathcal{U}^{-1}v]
$,
$
  v \in \Dom(\hat{J}_\theta^s) := \mathcal{U}\,\Dom(J_\theta^s)
$.
A straightforward calculation yields
\begin{equation}\label{J0.form}
\begin{aligned}
  \hat{J}_\theta^s[v]
  &= \|\partial_1 v\|^2
  + \frac{1}{16} \, \|y_1 v\|^2
  + e^s \, \|\partial_2 v\|^2
  - E_1 \;\! e^s \, \|v\|^2
  \,,
  \\
  v \in \Dom(\hat{J}_\theta^s)
  &= \mathfrak{D}_\theta(\Omega)
  \cap \sii(\Omega, y_1^2 \, dy)
  \,.
\end{aligned}
\end{equation}
In particular, $\Dom(\hat{J}_\theta^s)$~is independent of~$s$.
In the distributional sense, we can write
\begin{equation}\label{Hamiltonian}
  \hat{T}_\theta^s =
  -\partial_1^2 + \frac{1}{16} \, y_1^2
  - e^s \;\! \partial_2^2
  - E_1 \;\! e^s
  \,.
\end{equation}

We observe that the `longitudinal part' of~$\hat{T}_\theta^s$
coincides with the quantum harmonic-oscillator Hamiltonian
\begin{equation}\label{HO}
  H := - \frac{d^2}{dy_1^2} + \frac{1}{16} \, y_1^2
  \qquad\mbox{in}\qquad
  \sii(\Real)
\end{equation}
(\ie\ the Friedrichs extension
of this operator initially defined on $C_0^\infty(\Real)$).
We recall the well known fact that the form domain
$$
  \Dom(H^{1/2}) =
  H^1(\Real) \cap \sii(\Real, y_1^2 \, dy_1)
$$
is compactly embedded in $\sii(\Real)$,
so that the spectrum of~$H$ is purely discrete.
In fact, the spectrum can be computed explicitly
(see any textbook on quantum mechanics,
\eg, \cite[Sec.~2.3]{Griffiths}):
\begin{equation}\label{HO.spec}
  \sigma(H) = \left\{
  \frac{1}{2} \left(n+\frac{1}{2}\right)
  \right\}_{n=0}^\infty
  \,.
\end{equation}

Using now the discreteness of spectra of~$H$ and $-\Delta_{DN}^{(-a,a)}$
together with the minimax principle,
one may easily conclude that also $\hat{T}_\theta^s$
(and therefore~$T_\theta^s$) is an operator
with compact resolvent for all $s\in[0,\infty)$.
In particular, $\mu_\theta(s)$~represents the lowest eigenvalue of~$T_\theta^s$.

\subsection{The asymptotic behaviour of the spectrum}
In order to study the decay rate via~\eqref{spectral.reduction.integral},
we need information about the limit of the eigenvalue~$\mu_\theta(s)$
as the time~$s$ tends to infinity.
Notice that the scaling of the transverse variable in~\eqref{Hamiltonian}
corresponds to considering the operator~$\hat{T}_\theta^1$
in the shrinking strip $\Real \times (-e^{-s/2}a,e^{-s/2}a)$.
This suggests that~$\hat{T}_\theta^s$ will converge, in a suitable sense,
to a one-dimensional operator of the type~\eqref{HO}.
We shall see that the difference between the twisted ($\theta=\pi$)
and untwisted case ($\theta=0$) consists in that the limit operator
for the former is subject to an extra Dirichlet boundary condition at $y_1=0$.

Thus, simultaneously to~$H$ introduced in~\eqref{HO},
let us therefore consider the self-adjoint operator~$H_D$ in~$\sii(\Real)$
whose quadratic form acts in the same way as that of~$H$
but has a smaller domain
$$
  \Dom(H_D^{1/2}) :=
  \big\{
  \varphi\in\Dom(H^{1/2})\ |\ \varphi(0)=0
  \big\}
  \,.
$$

In fact, it is readily seen that~$\hat{T}_0^s$
can be identified with the decomposed operator
\begin{equation}\label{s.decomposition}
  H \otimes 1 + 1 \otimes
  (- e^s \;\! \Delta_{DN}^{(-a,a)} - E_1 \;\! e^s)
  \qquad\mbox{in}\qquad
  \sii(\Real) \otimes \sii((-a,a))
  \,,
\end{equation}
where~$1$ denotes the identity operators in the appropriate spaces.
Using~\eqref{HO.spec}, it follows that $\mu_0(s) = 1/4$
for all $s \in [0,\infty)$. Consequently,
\begin{equation}\label{untwisted.infinity}
  \mu_0(\infty) := \lim_{s\to\infty} \mu_0(s) = 1/4 \,.
\end{equation}
Moreover, \eqref{s.decomposition}~can be used to show
that~$\hat{T}_0^s$ converges to~$H$
in the norm-resolvent sense as $s \to \infty$,
if the latter is considered as an operator acting
on the subspace of~$\sii(\Omega)$ consisting of functions
of the form $\varphi(y_1)\mathcal{J}_{1}(y_2)$,
where~$\mathcal{J}_{1}$ is introduced in~\eqref{spectrum.straight}.

It is more difficult (and more interesting) to establish
the asymptotic behaviour of~$\mu_\pi(s)$.
A fine analysis of its behaviour leads to the key observation of the paper,
ensuring a gain of~$1/2$ in the decay rate in the twisted case.

We decompose the Hilbert space~$\sii(\Omega)$
into an orthogonal sum
\begin{equation}\label{direct}
  \sii(\Omega) = \mathfrak{H}_1 \oplus \mathfrak{H}_1^\bot
  \,,
\end{equation}
where the subspace~$\mathfrak{H}_1$ consists of functions
of the form
\begin{equation}\label{direct1}
  \psi_1(y) =
    \varphi(y_1)\,\mathcal{J}_1\big(\sgn(-y_1) \, y_2\big)
  \,.
\end{equation}
Notice that $y_2 \mapsto \mathcal{J}_{1}(-y_2)$
is an eigenfunction of $-\Delta_{ND}^{(-a,a)}$,
\ie\ the Laplacian in $\sii((-a,a))$
subject to the Neumann boundary condition at~$-a$
and Dirichlet boundary condition at~$a$
(reversed boundary conditions with respect to $-\Delta_{DN}^{(-a,a)}$).
Hence~$\psi_1$ satisfies the boundary conditions of~$-\Delta_\pi$.
Given any $\psi \in \sii(\Omega)$, we have the decomposition
$\psi = \psi_1 + \phi$ with $\psi_1\in\mathfrak{H}_1$ as above
and $\phi \in \mathfrak{H}_1^\bot$.
The mapping $\iota:\varphi\mapsto\psi_1$
is an isomorphism of $\sii(\Real)$ onto~$\mathfrak{H}_1$.
Hence, with an abuse of notations,
we may identify any operator~$h$ on $\sii(\Real)$
with the operator $\iota h \iota^{-1}$
acting on $\mathfrak{H}_1 \subset \sii(\Omega)$.
Having this convention in mind,
we state the following convergence result.
\begin{Proposition}\label{Prop.strong}
The operator $\hat{T}_\pi^s$ converges to
$
  H_D \oplus 0^\bot
$
in the strong-resolvent sense as $s \to \infty$, \ie,
\begin{equation*}
  \forall F \in \sii(\Omega) \,, \qquad
  \lim_{s \to \infty}
  \left\|
  \big(\hat{T}_\pi^s+1\big)^{-1} F
  - \left[\big(H_D + 1 \big)^{-1} \oplus 0^\bot\right] F
  \right\|
  = 0
  \,.
\end{equation*}
\end{Proposition}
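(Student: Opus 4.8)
The plan is to verify strong-resolvent convergence directly, using the quadratic-form characterization of the resolvent and the structure of the decomposition~\eqref{direct}--\eqref{direct1}. Fix $F\in\sii(\Omega)$ and set $v_s := (\hat{T}_\pi^s+1)^{-1}F$, the unique minimizer of $w\mapsto \hat{J}_\pi^s[w]+\|w\|^2-2(F,w)$ over $\Dom(\hat{J}_\pi^s)=\mathfrak{D}_\pi(\Omega)\cap\sii(\Omega,y_1^2\,dy)$. First I would derive a uniform (in $s\ge s_0$ for some fixed $s_0$) a priori bound: testing with $w=v_s$ and using the Poincar\'e inequality~\eqref{Poincare} in the transverse variable (which gives $e^s\|\partial_2 v_s\|^2 - E_1 e^s\|v_s\|^2\ge 0$ on $\mathfrak{D}_\pi$, since $-\Delta_\pi\ge E_1$ survives the transverse scaling), one gets $\|\partial_1 v_s\|^2 + \tfrac{1}{16}\|y_1 v_s\|^2 + \|v_s\|^2 \le \|F\|^2$ and also $e^s(\|\partial_2 v_s\|^2 - E_1\|v_s\|^2)\le\|F\|^2$. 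Hence $\{v_s\}$ is bounded in the $s$-independent space $\Dom(H^{1/2})\otimes\sii((-a,a)) \cap \mathfrak{D}_\pi(\Omega)$, and moreover the transverse ``Rayleigh defect'' $\|\partial_2 v_s\|^2 - E_1\|v_s\|^2 = O(e^{-s})\to 0$.

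\smallskip
Next I would extract a weakly convergent subsequence $v_{s_k}\rightharpoonup v_\infty$ in $H^1(\Omega)\cap\sii(\Omega,y_1^2\,dy)$, and identify the limit. The vanishing transverse defect forces $v_\infty$ to lie, for a.e.\ $y_1$, in the ground-state eigenspace of the transverse operator; because the transverse boundary conditions of $-\Delta_\pi$ flip across $y_1=0$, this eigenspace is spanned by $\mathcal{J}_1(\sgn(-y_1)y_2)$, so $v_\infty\in\mathfrak{H}_1$, say $v_\infty(y)=\varphi_\infty(y_1)\mathcal{J}_1(\sgn(-y_1)y_2)$. The crucial extra constraint is the \emph{Dirichlet condition $\varphi_\infty(0)=0$}: this comes from the fact that a function $\varphi(y_1)\mathcal{J}_1(\sgn(-y_1)y_2)$ lies in $H^1(\Omega)$ only if the trace across the line $y_1=0$ matches, and $\mathcal{J}_1(y_2)$ vs.\ $\mathcal{J}_1(-y_2)$ agree on $(-a,a)$ only at points where $\mathcal{J}_1=0$ — i.e.\ nowhere except after multiplying by a factor vanishing at $y_1=0$; so $\varphi_\infty\in\Dom(H_D^{1/2})$. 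Then I would pass to the limit in the weak Euler--Lagrange equation: for any fixed test function $w\in\mathfrak{D}_\pi(\Omega)\cap\sii(\Omega,y_1^2dy)$ of the special form $\chi(y_1)\mathcal{J}_1(\sgn(-y_1)y_2)$ with $\chi(0)=0$, the terms $e^{s_k}(\partial_2 v_{s_k},\partial_2 w) - E_1 e^{s_k}(v_{s_k},w)$ vanish identically (since $w$ is a transverse ground state), while the longitudinal terms converge, yielding $(\varphi_\infty',\chi') + \tfrac{1}{16}(y_1\varphi_\infty,y_1\chi) + (\varphi_\infty,\chi) = (F_1,\chi)$ where $F_1$ is the $\mathfrak{H}_1$-component of $F$; that is, $v_\infty = [(H_D+1)^{-1}\oplus 0^\bot]F$. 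A standard lower-semicontinuity / energy-convergence argument (the norms $\hat{J}_{s_k}^\pi[v_{s_k}]+\|v_{s_k}\|^2 = (F,v_{s_k})\to(F,v_\infty)$ converge to the limiting energy) upgrades weak to strong convergence in $\sii(\Omega)$, and uniqueness of the limit removes the subsequence.

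\smallskip
The main obstacle is the second step — rigorously extracting the asymptotic Dirichlet condition at $y_1=0$ and justifying that the limiting minimizer is constrained to $\mathfrak{H}_1$ with $\varphi_\infty(0)=0$. The subtlety is that the ``penalty'' $e^s(\|\partial_2\cdot\|^2 - E_1\|\cdot\|^2)$ becomes singular, so one cannot simply pass to the limit in the form; instead one must argue via the a priori bound that $\mathrm{dist}\big(v_{s_k},\mathfrak{H}_1\big)_{H^1}\to 0$, and then show that the $H^1$-regularity of the $\mathfrak{H}_1$-components, together with the sign flip of $\mathcal{J}_1$ across $y_1=0$, enforces the trace condition $\varphi_\infty(0)=0$ in the limit (even though $\varphi_{s_k}(0)$ need not vanish). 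I would handle this by a localization argument near $y_1=0$: the jump $\mathcal{J}_1(-y_2)-\mathcal{J}_1(y_2)$ is a fixed nonzero $\sii((-a,a))$ function, so an $H^1$-bound across $y_1=0$ combined with $v_{s_k}$ being asymptotically in $\mathfrak{H}_1$ forces the $y_1$-profile to vanish at the interface. Everything else — the uniform bounds, the weak compactness, the identification of the Euler--Lagrange limit, and the weak-to-strong upgrade — is routine given the discreteness of $\sigma(H)$ and $\sigma(-\Delta_{DN}^{(-a,a)})$ already established above.
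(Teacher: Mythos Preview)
Your proposal is correct and follows essentially the same route as the paper: uniform a~priori bounds from testing the resolvent equation with $v_s$, weak compactness in the ($s$-independent) form domain, identification of the limit in $\mathfrak{H}_1$ with the extra Dirichlet condition $\varphi_\infty(0)=0$ forced by the $H^1$-regularity across the sign-jump of $\mathcal{J}_1$, and passage to the limit against test functions $\chi(y_1)\mathcal{J}_1(\sgn(-y_1)y_2)$ with $\chi\in C_0^\infty(\Real\setminus\{0\})$. The only tactical differences are that the paper uses the explicit transverse spectral gap $E_2-E_1$ to show directly that the $\mathfrak{H}_1^\bot$-component $\phi_s$ satisfies $\|\phi_s\|^2\le Ce^{-s}$, and obtains strong $\sii(\Omega)$-convergence from the compact embedding of the form domain rather than from your energy-convergence argument; both variants work and lead to the same conclusion.
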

\begin{proof}
We proceed as in the proof of~\cite[Prop.~5.4]{KZ}.
For any fixed $F \in \sii(\Omega)$,
let us set $\psi_s := (\hat{T}_\pi^s+1)^{-1}F$.
In other words, $\psi_s$~satisfies the resolvent equation
\begin{equation}\label{re}
  \forall v \in \Dom(\hat{J}_\pi^s) \,, \qquad
  \hat{J}_\pi^s(v,\psi_s) + (v,\psi_s)
  = (v,F)
  \,,
\end{equation}
where~$(\cdot,\cdot)$ denotes the inner product in~$\sii(\Omega)$
and $\hat{J}_\pi^s(\cdot,\cdot)$ is the sesquilinear form
associated with~\eqref{J0.form}.
In particular, choosing~$\psi_s$ for the test function~$v$ in~\eqref{re},
we have
\begin{multline}\label{resolvent.identity}
  \|\partial_1 \psi_s\|^2
  + \frac{1}{16} \, \|y_1 \psi_s\|^2
  + e^s \Big(
  \|\partial_2\psi_s\|^2 - E_1 \|\psi_s\|^2
  \Big)
  + \|\psi_s\|^2
  \\
  = (\psi_s,F)
  \leq \frac{1}{4} \, \|\psi_s\|^2 + \|F\|^2
  \,.
\end{multline}
Notice that $\|\partial_2\psi_s\|^2 \geq E_1 \|\psi_s\|^2$
by Fubini's theorem and the Poincar\'e inequality
for $-\Delta_{DN}^{(-a,a)}$ and $-\Delta_{ND}^{(-a,a)}$.
Consequently,
\begin{equation}\label{bounds}
  \|\partial_1 \psi_s\|^2 \leq C
  \,, \quad
  \|y_1 \psi_s\|^2 \leq C
  \,, \quad
  \|\psi_s\|^2 \leq C
  \,, \quad
  \|\partial_2\psi_s\|^2 - E_1 \|\psi_s\|^2
  \leq C e^{-s}
  \,,
\end{equation}
where~$C$ is a constant proportional to~$\|F\|^2$.

Now we employ the decomposition
$$
  \psi_s(y)
  = \varphi_s(y_1) \, \mathcal{J}_1\big(\sgn(-y_1) \, y_2\big)
  + \phi_s(y)
$$
where $\phi_s \in \mathfrak{H}_1^\bot$, \ie,
\begin{equation}\label{orthogonality}
  \forall y_1 \in \Real \,, \qquad
  \int_{-a}^a
  \mathcal{J}_1\big( \sgn(-y_1) \, y_2 \big) \,
  \phi_s(y_1,y_2) \, dy_2 = 0
  \,.
\end{equation}
That is, $y_2 \mapsto \phi_s(y_1,y_2)$ is orthogonal
to the ground-state eigenfunction of $-\Delta_{DN}^{(-a,a)}$
(respectively of $-\Delta_{ND}^{(-a,a)}$) if $y_1<0$
(respectively $y_1>0$).
Then
\begin{align*}
  \|\partial_2\psi_s\|^2 - E_1 \|\psi_s\|^2
  &=\|\partial_2\phi_s\|^2 - E_1 \|\phi_s\|^2
  \\
  &= \mbox{$\frac{1}{2}$} \;\! \|\partial_2\phi_s\|^2
  + \mbox{$\frac{1}{2}$} \;\! \|\partial_2\phi_s\|^2
  - E_1 \|\phi_s\|^2
  \\
  &\geq \mbox{$\frac{1}{2}$} \;\! \|\partial_2\phi_s\|^2
  + \big(\mbox{$\frac{1}{2}$}\;\!E_2-E_1\big) \;\! \|\phi_s\|^2
  \,,
\end{align*}
where $E_2=9E_1$ denotes the second eigenvalue
of~$-\Delta_{DN}^{(-a,a)}$
(which coincides with that of $-\Delta_{ND}^{(-a,a)}$).
Thus it follows from the last inequality of~\eqref{bounds} that
\begin{equation}\label{ri1}
  \|\phi_s\|^2 \leq C e^{-s}
  \qquad\mbox{and}\qquad
  \|\partial_2\phi_s\|^2 \leq C e^{-s}
  \,,
\end{equation}
where~$C$ is a constant proportional to~$\|F\|^2$.

It follows from~\eqref{bounds} that $\{\psi_s\}_{s>0}$
is a bounded family in $\Dom(\hat{J}_\pi^s)$.
Therefore it is precompact in the weak topology of $\Dom(\hat{J}_\pi^s)$.
Let~$\psi_\infty$ be a weak limit point,
\ie, for an increasing sequence of positive numbers $\{s_n\}_{n\in\Nat}$
such that $s_n \to \infty$ as $n \to \infty$,
$\{\psi_{s_n}\}_{n\in\Nat}$
converges weakly to~$\psi_\infty$ in $\Dom(\hat{J}_\pi^s)$.
Actually, we may assume that it converges strongly in $\sii(\Omega)$
because $\Dom(\hat{J}_\pi^s)$ is compactly embedded in $\sii(\Omega)$.
Since $\{\phi_{s_n}\}_{n\in\Nat}$ converges strongly to zero in $\sii(\Omega)$
due to~\eqref{ri1}, we know that $\psi_\infty \in \mathfrak{H}_1$, \ie,
$$
  \psi_\infty(y)
  = \varphi_\infty(y_1) \, \mathcal{J}_1\big(\sgn(-y_1) \, y_2\big)
$$
with some $\varphi_\infty \in \sii(\Real)$.
Since the weak derivative $\partial_1\psi_\infty \in \sii(\Omega)$ exists,
we necessarily have $\varphi_\infty \in H^1(\Real)$ and
$$
  \varphi_\infty(0) = 0
  \,.
$$

Finally, let $\varphi \in C_0^\infty(\Real\!\setminus\!\{0\})$ be arbitrary.
Taking
$$
  v(y) := \varphi(y_1) \, \mathcal{J}_1\big(\sgn(-y_1) \, y_2\big)
$$
as the test function in~\eqref{re}, with~$s$ being replaced by~$s_n$,
and sending~$n$ to infinity, we easily check that
\begin{equation*}
  (\dot\varphi,\dot\varphi_\infty)_{\sii(\Real)}
  + \frac{1}{16} \, (y_1\varphi,y_1\varphi_\infty)_{\sii(\Real)}
  + (\varphi,\varphi_\infty)_{\sii(\Real)}
  = (\varphi,f)_{\sii(\Real)}
  \,,
\end{equation*}
where
$$
  f(y_1) := \int_{-a}^a
  \mathcal{J}_1\big( \sgn(-y_1) \, y_2 \big) \,
  F(y_1,y_2) \, dy_2
  \,.
$$
That is, $\varphi_\infty = (H_D+1)^{-1} f$,
for \emph{any} weak limit point of $\{\varphi_s\}_{s \geq 0}$.
Summing up, we have shown that~$\psi_{s}$
converges strongly to $\psi_\infty$
in $\sii(\Omega)$ as $s \to \infty$,
where
$
  \psi_\infty(y)
  = \big[(H_D+z)^{-1} \oplus 0^\bot \big] F
$.
\end{proof}
\begin{Corollary}\label{Corol.strong}
One has
$$
  \mu_\pi(\infty) := \lim_{s\to\infty} \mu(s) = 3/4
  \,.
$$
\end{Corollary}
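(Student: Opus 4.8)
The plan is to deduce the convergence of the lowest eigenvalue $\mu_\pi(s)$ from the strong-resolvent convergence of Proposition~\ref{Prop.strong} together with a complementary \emph{lower} semicontinuity argument, since strong-resolvent convergence alone controls the spectrum only from above. First I would compute the bottom of the spectrum of the limit operator $H_D \oplus 0^\bot$. The operator $H_D$ is the harmonic oscillator $H$ with an added Dirichlet condition at $y_1=0$; by reflection its eigenvalues are those eigenvalues of $H$ whose eigenfunctions are odd, i.e.\ the odd Hermite functions, so by~\eqref{HO.spec} its lowest eigenvalue is $\frac12(1+\frac12)=\tfrac34$. The summand $0^\bot$ contributes $0$ on $\mathfrak{H}_1^\bot$, but in the limit the resolvent $(H_D+1)^{-1}\oplus 0^\bot$ is \emph{not} the resolvent of a semibounded operator on all of $\sii(\Omega)$ in the usual sense — however Proposition~\ref{Prop.strong} tells us precisely that for any $F$, $(\hat T_\pi^s+1)^{-1}F \to [(H_D+1)^{-1}\oplus 0^\bot]F$, and what we actually need is only the behaviour of the \emph{lowest} eigenvalue, which lives on the $\mathfrak H_1$-component in the limit.

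For the upper bound $\limsup_{s\to\infty}\mu_\pi(s)\le \tfrac34$ I would use the minimax principle with a fixed trial function: take $\varphi$ the ground state of $H_D$ (or a smooth compactly supported approximation vanishing at $0$), set $v(y)=\varphi(y_1)\,\mathcal{J}_1(\sgn(-y_1)y_2)\in\mathfrak{D}_\pi(\Omega)\cap \sii(\Omega,y_1^2dy)$, and evaluate $\hat J_\pi^s[v]/\|v\|^2$. Because $v\in\mathfrak H_1$ and $\mathcal J_1(\sgn(-y_1)\,\cdot)$ is the ground state of the relevant transverse operator on each half-line, the terms $e^s(\|\partial_2 v\|^2-E_1\|v\|^2)$ vanish identically (not merely asymptotically), so $\hat J_\pi^s[v]=\|\partial_1\varphi\|^2_{\sii(\Real)}+\tfrac1{16}\|y_1\varphi\|^2_{\sii(\Real)}$, independent of $s$, and taking the infimum over such $\varphi$ gives exactly $\tfrac34$. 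For the lower bound $\liminf_{s\to\infty}\mu_\pi(s)\ge\tfrac34$ I would argue by contradiction: suppose along a subsequence $\mu_\pi(s_n)\to c<\tfrac34$, let $\psi_{s_n}$ be the corresponding normalized ground states, and run the estimates in the proof of Proposition~\ref{Prop.strong} — from $\hat J_\pi^{s_n}[\psi_{s_n}]=\mu_\pi(s_n)$ and the Poincaré-type inequality on the transverse variable one gets uniform $H^1(\Omega,y_1^2dy)$ bounds, hence (after passing to a further subsequence) strong $\sii(\Omega)$ convergence to some $\psi_\infty$, and the $e^{-s_n}$-smallness of the $\mathfrak H_1^\bot$-component forces $\psi_\infty=\varphi_\infty(y_1)\mathcal J_1(\sgn(-y_1)y_2)$ with $\varphi_\infty\in H^1(\Real)$, $\varphi_\infty(0)=0$ and $\|\varphi_\infty\|_{\sii(\Real)}=1$. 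Lower semicontinuity of the quadratic form under weak convergence then yields $\|\partial_1\varphi_\infty\|^2+\tfrac1{16}\|y_1\varphi_\infty\|^2\le \liminf \hat J_\pi^{s_n}[\psi_{s_n}]=c$, contradicting $\varphi_\infty\perp$ constants implies that quantity is $\ge\tfrac34$ (the variational characterization of the bottom of $H_D$). Combining the two bounds gives $\mu_\pi(\infty)=\tfrac34$.

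The main obstacle I anticipate is the lower bound: strong-resolvent convergence does not by itself prevent eigenvalues from escaping downwards, so one genuinely needs the a priori compactness coming from the $\tfrac1{16}y_1^2$ harmonic-oscillator confinement in the $y_1$-direction (uniform in $s$) together with the transverse spectral gap $\tfrac12 E_2-E_1=\tfrac72 E_1>0$ that pushes the $\mathfrak H_1^\bot$-part of any low-energy state to zero at rate $e^{-s}$, exactly as recorded in~\eqref{ri1}. Once those uniform bounds are in hand, the weak lower semicontinuity of $\hat J_\pi^s[\cdot]$ (its $s$-dependent part $e^s(\|\partial_2 v\|^2-E_1\|v\|^2)$ being nonnegative can only help) closes the argument. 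I would also remark that the same scheme re-proves $\mu_0(\infty)=1/4$ directly from~\eqref{s.decomposition} and~\eqref{HO.spec}, so Corollary~\ref{Corol.strong} together with~\eqref{untwisted.infinity} is what feeds the $1/2$ gain into~\eqref{spectral.reduction.integral}.
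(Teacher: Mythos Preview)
Your proof is correct and takes a genuinely different route from the paper. The paper's argument is very short: it notes that strong-resolvent convergence alone does not guarantee spectral convergence, but then invokes an abstract result (citing Weidmann) that, because all the operators involved have purely discrete spectrum, the eigenprojections in fact converge in norm, so $\mu_\pi(s)$ converges to the lowest eigenvalue of $H_D$, which by symmetry equals the second eigenvalue $3/4$ of $H$. Your approach instead bypasses the abstract reference entirely: the upper bound comes from an explicit trial function in $\mathfrak{H}_1$ (and in fact yields the sharper statement $\mu_\pi(s)\le 3/4$ for \emph{every} $s\ge 0$, not merely in the limit), while the lower bound reruns the compactness machinery from the proof of Proposition~\ref{Prop.strong} directly on normalized ground states, exploiting the uniform harmonic-oscillator confinement in~$y_1$ and the transverse spectral gap to force the limit into $\Dom(H_D^{1/2})$, and then concludes by weak lower semicontinuity of the form. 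The trade-off is clear: the paper's proof is two sentences but relies on an external theorem whose applicability to the somewhat unusual limit object $(H_D+1)^{-1}\oplus 0^\bot$ requires a moment's thought; your argument is longer but fully self-contained and makes explicit where the compactness (the $\tfrac{1}{16}y_1^2$ term) and the $e^{-s}$ decay of the $\mathfrak{H}_1^\bot$-component enter. One small slip: the phrase ``$\varphi_\infty\perp$ constants'' is not what you mean---the relevant constraint is $\varphi_\infty(0)=0$, which places $\varphi_\infty$ in $\Dom(H_D^{1/2})$ and hence gives the $3/4$ lower bound by the variational principle for $H_D$, as your parenthetical correctly states.
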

\begin{proof}
In general, the strong-resolvent convergence of Proposition~\ref{Prop.strong}
is not sufficient to guarantee the convergence of spectra.
However, in our case, since the spectra are purely discrete,
the eigenprojections converge even in norm (\cf~\cite{Weidmann_1980}).
In particular, $\mu_\pi(s)$~converges to the first eigenvalue of~$H_D$
as $s \to \infty$.
It remains to notice that the first eigenvalue of~$H_D$ coincides
(in view of the symmetry)
with the second eigenvalue of~$H$ which is~$3/4$ due to~\eqref{HO.spec}.
\end{proof}
%

\subsection{A lower bound to the decay rate}\label{Sec.improved}
%
We come back to~\eqref{spectral.reduction.integral}.
Recalling~\eqref{untwisted.infinity} and Corollary~\ref{Corol.strong},
we know that for arbitrarily small positive number~$\eps$
there exists a (large) positive time~$s_\eps$ such that
for all $s \geq s_\eps$, we have $\mu_\theta(s) \geq \mu_\theta(\infty) - \eps$.
Hence, fixing $\eps>0$, for all $s \geq s_\eps$, we have
\begin{align*}
  {-\int_0^s \mu_\theta(r) \, dr}
  &\leq {-\int_0^{s_\eps} \mu_\pi(r) \, dr} {-[\mu_\theta(\infty)-\eps](s-{s_\eps})}
  \\
  &\leq {[\mu_\theta(\infty)-\eps] s_\eps} {-[\mu_\theta(\infty)-\eps] s}
  \,,
\end{align*}
where the second inequality is due to the fact
that~$\mu_\theta(s)$ is non-negative for all $s \geq 0$
(it is in fact greater than or equal to~$1/4$,
\cf~Proposition~\ref{Prop.positivity} below).
At the same time, assuming $\eps \leq 1/4$, we trivially have
$$
  {-\int_0^s \mu_\theta(r) \, dr}
  \leq 0
  \leq {[\mu_\theta(\infty)-\eps] s_\eps} {-[\mu_\theta(\infty)-\eps] s}
$$
also for all $s \leq s_\eps$.
Summing up, for every $s \in [0,\infty)$, we have
\begin{equation}\label{instead}
  \|\tilde{u}(s)\|_{K}
  \leq C_\eps \, e^{-[\mu_\theta(\infty)-\eps]s} \, \|\tilde{u}_0\|_{K}
  \,,
\end{equation}
where $C_\eps := e^{s_\eps} \geq e^{[\mu_\theta(\infty)-\eps]s_\eps}$.

Now we return to the original variables
$
  (x_1,x_2,t) = (e^{s/2}y_1,y_2,e^s-1)
$.
Using~\eqref{preserve} together with the point-wise estimate $1 \leq K$,
and recalling that $\tilde{u}_0=u_0$,
it follows from~\eqref{instead} that
$$
  \|u(t)\|
  = \|\tilde{u}(s)\|
  \leq \|\tilde{u}(s)\|_{K}
  \leq C_\eps \, (1+t)^{-[\mu_\theta(\infty)-\eps]} \, \|u_0\|_{K}
$$
for every $t \in [0,\infty)$.
Consequently, we conclude with
$$
  \|S_\theta(t)\|_{\sii(\Omega,K) \to \sii(\Omega)}
  = \sup_{u_0 \in \sii(\Omega,K)\setminus\{0\}}
  \frac{\|u(t)\|}{\ \|u_0\|_{K}}
  \leq C_\eps \, (1+t)^{-[\mu_\theta(\infty)-\eps]}
$$
for every $t \in [0,\infty)$.
Since~$\eps$ can be made arbitrarily small,
this bound implies
$$
  \gamma_\theta \geq \mu_\theta(\infty)
  \,.
$$
This together with Corollary~\ref{Corol.straight}
proves Theorem~\ref{Thm.rate}.

\subsection{A global upper bound to the heat semigroup}\label{Sec.alt}
%
Theorem~\ref{Thm.rate} provides quite precise information
about the extra polynomial decay of solutions~$u$ of~\eqref{heat}
in a twisted tube in the sense that the decay rate~$\gamma_\pi$
is  better by a factor $1/2$ than in the untwisted case.
On the other hand, we have no control over
the constant~$C_\gamma$ in~\eqref{rate}
(in principle it may blow up as $\gamma \to \gamma_\theta$).
We therefore conclude this section by establishing
a global (in time) upper bound to the heat semigroup
(\ie~we get rid of the constant~$C_\gamma$)
but the prize we pay is just a qualitative knowledge
about the decay rate.
It is a consequence of~\eqref{spectral.reduction.integral}
and the following result:
\begin{Proposition}\label{Prop.positivity}
$
  \forall s \geq 0,
  \qquad
  \mu_0(s) = 1/4
  \,, \qquad
  \mu_\pi(s) > 1/4
  \,.
$
\end{Proposition}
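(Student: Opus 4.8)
The plan is to read off both identities from the coercivity of the form $\hat{J}_\theta^s$ in~\eqref{J0.form}; recall that $\mu_\theta(s)$ is the lowest eigenvalue of $\hat{T}_\theta^s$ (unitarily equivalent to $T_\theta^s$), the latter having compact resolvent for every $s\ge 0$. The equality $\mu_0(s)=1/4$ is already contained in the discussion around~\eqref{untwisted.infinity}: by the tensor decomposition~\eqref{s.decomposition}, $\sigma(\hat{T}_0^s)$ is the sum of $\sigma(H)$ and of the spectrum of $-e^s\Delta_{DN}^{(-a,a)}-E_1 e^s$, whose bottoms are $1/4$ (by~\eqref{HO.spec}) and $0$, respectively. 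So the whole content of the proposition is the \emph{strict} bound $\mu_\pi(s)>1/4$.

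First I would prove the non-strict inequality $\mu_\pi(s)\ge 1/4$. For $v\in\Dom(\hat{J}_\pi^s)=\mathfrak{D}_\pi(\Omega)\cap\sii(\Omega,y_1^2\,dy)$ I split $\hat{J}_\pi^s[v]$ into a longitudinal and a transverse part via Fubini. For a.e.\ $y_2$ the slice $v(\cdot,y_2)$ lies in $\Dom(H^{1/2})$, so $\|\partial_1 v\|^2+\frac1{16}\|y_1 v\|^2\ge\frac14\|v\|^2$ by~\eqref{HO.spec}; and for a.e.\ $y_1$ the slice $v(y_1,\cdot)$ satisfies the Dirichlet--Neumann (for $y_1<0$) or Neumann--Dirichlet (for $y_1>0$) conditions on $(-a,a)$, both with first eigenvalue $E_1$, so $e^s\big(\|\partial_2 v\|^2-E_1\|v\|^2\big)\ge 0$. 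Summing gives $\hat{J}_\pi^s[v]\ge\frac14\|v\|^2$, \ie\ $\mu_\pi(s)\ge 1/4$.

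The crux is to rule out equality. Suppose $\mu_\pi(s)=1/4$ and let $v\neq 0$ be a corresponding eigenfunction; since $e^s\ge 1$, equality must hold in \emph{both} estimates above. Equality in the longitudinal part forces $v(\cdot,y_2)$ to be, for a.e.\ $y_2$, a multiple of the ground state of $H$, the nowhere-vanishing Gaussian $e^{-y_1^2/8}$; hence $v(y_1,y_2)=c(y_2)\,e^{-y_1^2/8}$. Equality in the transverse part forces $v(y_1,\cdot)$ to be, for a.e.\ $y_1$, a multiple of $\mathcal{J}_1\big(\sgn(-y_1)\,y_2\big)$; hence $v(y)=\varphi(y_1)\,\mathcal{J}_1\big(\sgn(-y_1)\,y_2\big)$ for some $\varphi$. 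Since $c\not\equiv 0$ (otherwise $v\equiv 0$) and $e^{-y_1^2/8}$ never vanishes, $\varphi$ does not vanish identically on either half-line; comparing the two representations of $v$ on $\{y_1>0\}$ and on $\{y_1<0\}$ separately shows that $\varphi(y_1)\,e^{y_1^2/8}$ is a nonzero constant $\beta_\pm$ on each half-line and that $c(y_2)=\beta_+\,\mathcal{J}_1(-y_2)=\beta_-\,\mathcal{J}_1(y_2)$ for a.e.\ $y_2\in(-a,a)$. But $y_2\mapsto\mathcal{J}_1(y_2)$ and $y_2\mapsto\mathcal{J}_1(-y_2)$ are linearly independent in $\sii((-a,a))$ --- the former vanishes at $-a$ but not at $a$, the latter the other way round --- so $\beta_+=\beta_-=0$, whence $c\equiv 0$ and $v\equiv 0$, a contradiction. (Equivalently, $v\in\mathfrak{H}_1\cap H^1(\Omega)$ forces $\varphi(0)=0$ across the jump of $\mathcal{J}_1(\sgn(-y_1)y_2)$ at $y_1=0$, which contradicts $v(0,\cdot)=c(\cdot)\,e^{0}\neq 0$; this is precisely the Dirichlet condition emerging at the switching point.) The only genuinely delicate step is this rigidity argument; the remaining ingredients are routine separation of variables together with the explicitly known spectra of $H$ and $-\Delta_{DN}^{(-a,a)}$.
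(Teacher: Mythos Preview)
Your proof is correct and follows essentially the same route as the paper's: the tensor decomposition~\eqref{s.decomposition} for $\mu_0$, Fubini together with the one-dimensional spectral gaps for $\mu_\pi\ge 1/4$, and a contradiction argument for strictness in which simultaneous saturation of the longitudinal and transverse bounds forces~$v$ into an impossible form. The only variation is in how the final contradiction is extracted: the paper uses interior continuity of the eigenfunction to get $\varphi(0)=0$ and then contrasts this with~$\varphi$ being the nowhere-vanishing ground state of~$H$, whereas your primary argument matches the two separated-variable representations directly and invokes the linear independence of $\mathcal{J}_1(\pm y_2)$, avoiding any appeal to regularity --- but you also record the paper's version in your parenthetical remark, so the two arguments are in substance the same.
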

\begin{proof}
The identity for~$\mu_0$ is readily seen from
the decomposition~\eqref{s.decomposition} and~\eqref{HO.spec}.
Using Fubini's theorem and the minimax principle,
it is also easy to deduce from~\eqref{J0.form} that
$\mu_\pi(s) \geq 1/4$ for all $s \geq 0$.
To show that the inequality is strict,
let us assume by contradiction that
$\mu_\pi(s) = 1/4$ for some $s \geq 0$.
Let~$v$ denote the corresponding eigenfunction of~$\hat{T}_\pi^s$.
Then the identity $\hat{J}_\pi^s[v]=\mu_\pi(s) \|v\|^2$ yields
\begin{equation}\label{identities}
  \|\partial_1 v\|^2
  + \frac{1}{16} \, \|y_1 v\|^2
  = \frac{1}{4} \, \|v\|^2
  \qquad\mbox{and}\qquad
  \|\partial_2 v\|^2 - E_1 \;\! \|v\|^2 = 0
  \,.
\end{equation}
Using the direct-sum decomposition~\eqref{direct},
the second identity implies that~$v$ is of the form~\eqref{direct1}.
The continuity of the eigenfunction~$v$ inside~$\Omega$
in turn requires that $\varphi(0)=0$.
However, this contradicts the first identity in~\eqref{identities}
which says, in view of~\eqref{HO.spec},
that~$\varphi$ is the first (therefore nowhere vanishing)
eigenfunction of~$H$.
\end{proof}

Combining this result with Corollary~\ref{Corol.strong},
we see that the number
\begin{equation}
  c_\theta := \inf_{s \in [0,\infty)}\mu_\theta(s) - 1/4
\end{equation}
is positive if $\theta=\pi$ and zero if $\theta=0$.
In any case, \eqref{spectral.reduction.integral}~implies
$$
  \|\tilde{u}(s)\|_{K}
  \leq \|\tilde{u}_0\|_{K} \, e^{-(c_\theta+1/4)s}
$$
for every $s \in [0,\infty)$.
Using this estimate instead of~\eqref{instead},
but following the same type of arguments as in Section~\ref{Sec.improved}
below~\eqref{instead}, we thus conclude with:
\begin{Theorem}\label{Thm.rate.alt}
We have
$$
  \forall t \in [0,\infty), \qquad
  \|S_\theta(t)\|_{\sii(\Omega,K) \to \sii(\Omega)}
  \leq (1+t)^{-(c_\theta+1/4)}
  \,,
$$
where $c_\pi > 0$ (and $c_0=0$).
\end{Theorem}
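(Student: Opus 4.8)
The plan is to obtain Theorem~\ref{Thm.rate.alt} as a direct consequence of the energy estimate~\eqref{spectral.reduction.integral} once we know that the quantity $\inf_{s\ge 0}\mu_\theta(s)$ strictly exceeds $1/4$ in the twisted case. The first step is therefore to understand $\mu_\theta(s)$ uniformly in $s$, not just in the limit $s\to\infty$. For $\theta=0$ this is immediate from the tensor decomposition~\eqref{s.decomposition}: the transverse factor $-e^s\Delta_{DN}^{(-a,a)}-E_1e^s$ is nonnegative (with zero as its bottom eigenvalue, attained at $\mathcal{J}_1$), so $\mu_0(s)$ equals the bottom of the spectrum of $H$, namely $1/4$, for every $s$; hence $c_0=0$. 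For $\theta=\pi$ one uses Fubini and the minimax principle on the form~\eqref{J0.form} to get $\mu_\pi(s)\ge 1/4$, and then Proposition~\ref{Prop.positivity} upgrades this to the strict inequality $\mu_\pi(s)>1/4$ for each fixed $s$.

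The second step is to pass from the pointwise strict inequality to a uniform one, i.e.\ to show $c_\pi=\inf_{s\ge 0}\mu_\pi(s)-1/4>0$. Here the key inputs are: (i) Proposition~\ref{Prop.positivity} gives $\mu_\pi(s)>1/4$ on the \emph{closed} half-line, in particular at $s=0$; (ii) Corollary~\ref{Corol.strong} gives $\mu_\pi(s)\to 3/4$ as $s\to\infty$, so $\liminf_{s\to\infty}\mu_\pi(s)=3/4>1/4$; and (iii) the map $s\mapsto\mu_\pi(s)$ is continuous on $[0,\infty)$ — this follows from analytic (or at least continuous) dependence of the form~\eqref{J0.form} on the parameter $e^s$ together with the uniform lower bound on the essential spectrum (the resolvent is compact for all $s$, by the argument already given via discreteness of $\sigma(H)$ and $\sigma(-\Delta_{DN}^{(-a,a)})$). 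Continuity plus the two limiting facts confine $\mu_\pi$ to a compact interval $[s=0,\,s=S]$ away from $1/4$ outside, and on that compact interval a continuous strictly-$>1/4$ function attains a positive minimum distance to $1/4$; hence $c_\pi>0$.

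The third step is purely the bookkeeping already carried out in Section~\ref{Sec.improved}: from $\mu_\theta(s)\ge c_\theta+1/4$ for all $s\ge 0$, estimate~\eqref{spectral.reduction.integral} gives $\|\tilde u(s)\|_K\le\|\tilde u_0\|_K\,e^{-(c_\theta+1/4)s}$, and then reverting to the physical variables via $t+1=e^s$, using the unitarity~\eqref{preserve}, the pointwise bound $1\le K$, and $\tilde u_0=u_0$, yields $\|u(t)\|\le(1+t)^{-(c_\theta+1/4)}\|u_0\|_K$; taking the supremum over $u_0\in\sii(\Omega,K)\setminus\{0\}$ gives the claimed operator-norm bound, with the constant equal to $1$ because no $\eps$-room is wasted this time.

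The main obstacle is step two, and more precisely the continuity (and behaviour at infinity) of $s\mapsto\mu_\pi(s)$. A compactness subtlety is that the forms $\hat J_\pi^s$ act on a fixed domain but with coefficient $e^s$ blowing up, so one must be careful that the lowest eigenvalue does not drift down to $1/4$ along some sequence $s_n\to\infty$ without being captured by the strong-resolvent limit; this is precisely what Corollary~\ref{Corol.strong} rules out (the eigenprojections converge in norm, so $\mu_\pi(s_n)\to 3/4$ for \emph{every} such sequence). Once continuity on compacts and the limit at infinity are in hand, the positivity of $c_\pi$ is elementary. One should also double-check that $\mu_\pi(0)>1/4$ is genuinely covered by Proposition~\ref{Prop.positivity} — it is, since that proposition is stated for all $s\ge 0$ including $s=0$ — so there is no degenerate endpoint to worry about.
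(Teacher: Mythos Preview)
Your proposal is correct and follows essentially the same route as the paper: use Proposition~\ref{Prop.positivity} together with Corollary~\ref{Corol.strong} to conclude $c_\pi>0$, then feed the uniform lower bound $\mu_\theta(s)\ge c_\theta+1/4$ into~\eqref{spectral.reduction.integral} and undo the self-similar change of variables exactly as in Section~\ref{Sec.improved}. The only difference is that you make explicit the continuity argument for $s\mapsto\mu_\pi(s)$ needed to pass from the pointwise strict inequality of Proposition~\ref{Prop.positivity} to the uniform one, whereas the paper simply asserts ``Combining this result with Corollary~\ref{Corol.strong}, we see that the number $c_\theta$ is positive if $\theta=\pi$'' and leaves that step implicit.
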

%

\section{Conclusions}\label{Sec.end}
%
The classical interpretation of the heat equation~\eqref{heat.intro}
is that its solution~$u$ gives the evolution of the temperature distribution
of a medium in the strip~$\Omega$ surrounded by a perfect insulator
on the Neumann boundary~$\Gamma_\theta^N$
and by a substance of a high-heat capacity and of zero temperature
on the Dirichlet boundary~$\Gamma_\theta^D$.
It also represents the simplest version of the stochastic Fokker-Planck
equation describing the Brownian motion in~$\Omega$
which is normally reflected on~$\Gamma_\theta^N$
and killed on~$\Gamma_\theta^D$
(\cf~\cite{Pinsky_2000} for a probabilistic setting in an analogous
higher-dimensional model).
Then the results of the present paper can be interpreted
as that the twisting of boundary conditions (\ie~$\theta=\pi$)
implies a faster cool-down/death
of the medium/Brownian particle in the strip.
Many other diffusive processes in nature are governed by~\eqref{heat.intro}.

Our proof that there is an extra decay rate for solutions of~\eqref{heat.intro}
if the boundary conditions are twisted was far from being straightforward.
This is a bit surprising because the result is quite
expectable from the physical interpretation,
if one notices that the twist makes it more difficult
for the Brownian particle to pass through the channel at $\{x_1=0\}$,
because of the proximity of killing boundary conditions.
At the same time, the Hardy inequality~\eqref{Hardy}
did not play any role in the proof of Theorem~\ref{Thm.rate}
(although, combining the theorem with the results of~\cite{KK2},
we eventually know that the existence of the Hardy inequality
is equivalent to the extra decay rate for the heat semigroup).
It would be desirable to find a more direct proof
of Theorem~\ref{Thm.rate} based on~\eqref{Hardy}.

We conjecture that the inequality of Theorem~\ref{Thm.rate}
can be replaced by equality, \ie, $\gamma_\pi=3/4$ for twisted strip.
The question of optimal value of the constant~$c_\pi$
(and its quantitative dependence on the half-width~$a$)
from Theorem~\ref{Thm.rate.alt} also constitutes an interesting open problem.
Note that the two quantities are related by
$c_\pi+1/4 \leq \gamma_\theta$.

The present paper can be viewed as a continuation
of the research initiated by our work~\cite{KZ},
where we investigated the large-time behaviour of
the heat semigroup in geometrically twisted Dirichlet tubes.
It confirms that the effect of twisting
(leading to the subcriticality of the Laplacian
and implying an improvement of the decay rate
of the associated heat semigroup)
is more general, namely it holds true
also in waveguide systems twisted via boundary conditions.
We expect that the extra decay rate will be induced
also in the systems twisted via embedding of the strip
into a negatively curved manifold,
for which the existence of Hardy inequalities is already known~\cite{K3}.

More generally, recall that we expect that there is always
an improvement of the decay rate for the heat semigroup
of a subcritical operator
(\cf~\cite[Conjecture in Sec.~6]{KZ} and~\cite[Conjecture~1]{FKP}).

\section*{Acknowledgment}
The work was partially supported by the Czech Ministry of Education,
Youth and Sports within the project LC06002,
and by Grant MTM2008-03541  the MICINN (Spain),
project PI2010-04 of the Basque Government
and the ERC Advanced Grant FP7-246775 NUMERIWAVES.

%
\bibliography{bib}
\bibliographystyle{amsplain}
\end{document}